\def\N{\mathbb{N}}
\def\R{\mathbb{R}}
\DeclareMathOperator{\conv}{conv}
\newtheorem{theorem}{Theorem}
\newtheorem{definition}{Definition}
\newtheorem{proposition}{Proposition}
\newcommand{\defgl}{\mathop{:=}}
\renewcommand{\rho}{\varrho}
\title{On the Geometry of Holmsen's Combinatorial Version of the Colorful Carath\'eodory}
\author{Helena Bergold and  Winfried Hochst\"attler \\ FernUniversit\"at in Hagen, Germany}
\date{}
\begin{document}
%
\maketitle
\setlength{\parindent}{0em}
\begin{abstract}
\noindent	Carath\'eodorys Theorem of convex hulls plays an important role in convex geometry. In 1982, B\'ar\'any formulated and proved a more general version, called the Colorful Carath\'eodory. This colorful version was even more generalized by Holmsen in 2016. He formulated a combinatorial extension in \cite{Holmsen} and found a topological proof. Taking a dual point of view we gain an equivalent formulation of Holmsen's result that has a more geometric meaning. 
\end{abstract}

\section{Carath\'eodory's Theorem}

The following theorem, named after Constantin Carath\'eodory, plays an important role in convex geometry since it gives an upper bound to the length of the convex combination of a point lying in a convex set. 
\begin{theorem}[Carath\'{e}odory]
	For $P \subseteq \R^d$ ($d \in \N$) and a point $x \in \conv P$ there are $d+1$ points $p_0, \ldots, p_d \in P$ such that  $ x \in \conv\{p_0, \ldots, p_d\}$.
\end{theorem}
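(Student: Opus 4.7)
The plan is to show by induction on the size of the representation that if $x$ is written as a convex combination of more than $d+1$ points of $P$, then we can rewrite it using strictly fewer points, and iterate until only $d+1$ remain.

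First I would start from the hypothesis: since $x \in \conv P$, there exist finitely many points $p_1,\ldots,p_k \in P$ together with coefficients $\lambda_1,\ldots,\lambda_k \geq 0$ with $\sum_{i=1}^k \lambda_i = 1$ and $x = \sum_{i=1}^k \lambda_i p_i$. If $k \leq d+1$ we are immediately done (possibly padding by repeating a point), so the interesting case is $k \geq d+2$.

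The key step is to exploit an affine dependence. Consider the $k-1 \geq d+1$ vectors $p_2-p_1, \ldots, p_k-p_1$ in $\R^d$; since there are more than $d$ of them, they are linearly dependent. Hence there exist coefficients $\mu_1,\ldots,\mu_k$, not all zero, satisfying $\sum_{i=1}^k \mu_i p_i = 0$ and $\sum_{i=1}^k \mu_i = 0$. Since the $\mu_i$ sum to zero and are not all zero, at least one is strictly positive. Then for every real $t$ we still have $x = \sum_{i=1}^k (\lambda_i - t\mu_i) p_i$ with coefficients summing to $1$. Choosing
\[
t \defgl \min\left\{ \frac{\lambda_i}{\mu_i} \;:\; \mu_i > 0 \right\},
\]
all new coefficients $\lambda_i - t\mu_i$ are nonnegative and at least one of them vanishes. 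This gives a convex representation of $x$ using at most $k-1$ points of $P$.

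Iterating this reduction (or phrasing it as an induction on $k$), I reach a representation using at most $d+1$ points, which is precisely the claim. I do not anticipate a serious obstacle: the only subtlety is making sure the minimum defining $t$ is taken over the indices with $\mu_i > 0$, which is a nonempty set because the $\mu_i$ sum to zero and are not identically zero, and that this choice indeed zeroes out one coefficient while keeping all others nonnegative.
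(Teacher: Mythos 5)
Your argument is correct and is the standard proof of Carath\'eodory's theorem: extract an affine dependence $\sum_i \mu_i p_i = 0$, $\sum_i \mu_i = 0$ from any representation with $k \geq d+2$ points, and shift by $t = \min\{\lambda_i/\mu_i : \mu_i > 0\}$ to kill one coefficient while preserving nonnegativity and the unit sum. The paper states this theorem as classical background without proof, so there is nothing to compare against; your reduction step is complete and the induction terminates as claimed.
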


   B\'ar\'any proved the Colorful Carath\'eodory \cite{Barany} in 1982, a more general version of Carath\'eodory's Theorem. In this version, the set $P$ is divided into different subsets. If these sets are disjoint, we can interpret them as colors, such that each point gets a color dependent on the subset in which the point is included. 
  
\begin{theorem}[Colorful Carath\'{e}odory, B\'ar\'any (1982) \cite{Barany}] \label{THM_ColofulCaratheodory}
	Consider $d+1$ sets $P_0, \ldots , P_d$ in $\R^d$ ($d \in \N$). If $x \in \R^d$ is in all convex hulls $\conv P_i$ ($0 \leq i \leq d$), there are points $p_i \in P_i$ in every $P_i$ ($ 0 \leq i \leq d$) such that $x$ is in the convex hull of those $d+1$ points, i.e. $ x \in \conv\{p_0, \ldots , p_d \}$.
\end{theorem}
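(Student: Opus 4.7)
The plan is to run B\'ar\'any's classical minimum-distance vertex-swap argument. By translation I may assume $x = 0$, so the task becomes: find $p_i \in P_i$ with $0 \in \conv\{p_0, \dots, p_d\}$. First I would apply the ordinary Carath\'eodory Theorem stated above inside each color class to replace every $P_i$ by a finite subset whose convex hull still contains $0$; this makes the set of colorful tuples $(p_0, \dots, p_d) \in P_0 \times \dots \times P_d$ finite, so among them one can pick a $\Delta = \{p_0, \dots, p_d\}$ minimizing $\mathrm{dist}(0, \conv \Delta)$. Let $y$ denote the unique point of $\conv \Delta$ nearest to $0$. The goal then reduces to showing $y = 0$.

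For the core argument, suppose toward a contradiction that $y \neq 0$. The nearest-point characterization of convex projection places $\conv \Delta$ in the closed half-space $H^+ = \{z : \langle z, y\rangle \ge \|y\|^2\}$, while $0$ lies strictly on the opposite side. Since $y$ is a boundary point of $\conv \Delta$, it lies on some facet, i.e., in $\conv(\Delta \setminus \{p_i\})$ for some index $i$. This facet uses exactly $d$ of the $d+1$ colors and omits color $i$. Now I would invoke the hypothesis $0 \in \conv P_i$: were $P_i$ contained in $H^+$, then $0 \in \conv P_i \subseteq H^+$ would contradict $\langle 0, y\rangle = 0 < \|y\|^2$. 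Hence some $p_i' \in P_i$ satisfies $\langle p_i', y\rangle < \|y\|^2$. Replacing $p_i$ by $p_i'$ yields a new colorful simplex $\Delta'$. The computation $\langle 0-y, p_i'-y\rangle = \|y\|^2 - \langle p_i', y\rangle > 0$ shows that the angle at $y$ in the triangle $(0, y, p_i')$ is strictly acute, so moving a small distance from $y$ toward $p_i'$ along $[y, p_i'] \subseteq \conv \Delta'$ strictly decreases the distance to $0$. This contradicts the minimality of $\Delta$ and forces $y = 0$.

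The main conceptual obstacle is the matching step: recognizing that if $y \ne 0$ then $y$ must lie on a facet of $\Delta$, which \emph{automatically} omits one of the $d+1$ colors, and then using precisely that color class to supply a replacement vertex. Everything else -- existence of a minimizer via finiteness, the separating hyperplane through $y$, and the strict decrease in distance -- are routine convex-geometric moves. A secondary detail worth checking carefully is the possibility that $\Delta$ is affinely dependent (so its convex hull has no full-dimensional facets); I would handle that either by a small generic perturbation of $P_0, \dots, P_d$ or by noting that in the degenerate case $y$ still lies in the convex hull of a proper sub-multiset of $\Delta$, and the same color-omission-and-swap argument goes through.
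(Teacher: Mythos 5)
Your proof is correct, but it takes a genuinely different route from the paper. The paper does not prove the Colorful Carath\'eodory Theorem from first principles at all: it states it with a citation to B\'ar\'any and then, in Section~3, \emph{derives} it as a special case of Holmsen's combinatorial Theorem~\ref{THM_Holmsen}, by encoding the affine dependencies of $E \cup \{x\}$ (for $E = P_0 \dot\cup \cdots \dot\cup P_d$) as an oriented matroid whose positive circuits are exactly the subsets whose convex hull contains $x$, and pairing it with the partition matroid whose independent sets are the ``rainbow'' sets meeting each $P_i$ at most once. Your argument is B\'ar\'any's original direct proof: reduce to finitely many colorful simplices, minimize the distance from $x$ to a colorful simplex, and perform a vertex swap in the color omitted by the minimal face containing the nearest point. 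What your approach buys is a short, elementary, self-contained proof using only basic convex geometry; what the paper's approach buys is a demonstration that the statement sits inside a much more general combinatorial framework (at the cost of resting on Holmsen's topologically proved theorem). One small point to tighten in your write-up: ``$y$ lies on some facet'' should be phrased as ``$y$ lies in the exposed face $\conv\Delta \cap \{z : \langle z,y\rangle = \|y\|^2\}$, which sits in a hyperplane of dimension at most $d-1$, so by Carath\'eodory $y \in \conv(\Delta\setminus\{p_i\})$ for some $i$''; this handles the degenerate (affinely dependent) case uniformly, without any perturbation, and is exactly the fix you sketch in your final paragraph.
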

 This theorem tells us that the point $x$ is in a colorful simplex, which means that each corner is in a different color, i.e. belongs to a different set $P_i$. 
 

It is even possible to extend the Colorful Carath\'eodory. In \cite{Holmsen} Holmsen considered a combinatorial and a topological extension of Theorem \ref{THM_ColofulCaratheodory} which are equivalent to each other. 

\section{Introduction to Matroid and Oriented Matroid Theory}
To understand the combinatorial version, we will shortly introduce the necessary terms in the theory of matroids and oriented matroids. For detailed information we refer to \cite{Oxley}, \cite{Sturmfels} and \cite{BachemKern}.
\begin{definition} \label{DEF_matroid}
	A \emph{matroid} $\mathcal{M}$ on a \emph{ground set} $E$ with a collection $\mathcal{C}$ of circuits is an ordered pair $\mathcal{M} = (E, \mathcal{C})$ such that the following conditions hold: 
	\begin{enumerate}
		\item[(M1)] $\emptyset \notin \mathcal{C}$.
		\item[(M2)] For $X,Y \in \mathcal{C}$ with $ X \subseteq Y$ is $ X = Y$. 
		\item[(M3)] If $ X,Y \in \mathcal{C}$, $X \neq Y$ and $ e \in X \cap Y$, there is a $ Z  \in \mathcal{C}$ such that $Z~\subseteq~(X~\cup~Y) \backslash \{e\}$.
	\end{enumerate}
	The elements of $\mathcal{C}$ are called \emph{circuits}. 
	A \emph{loop} is an element $ x\in E$ such that~$\{x\}$ is a circuit. 
	 An \emph{independent set} of a matroid $ \mathcal{M}=(E, \mathcal{C})$ is a subset $X \subseteq E$ such that no subset of $X$ is a circuit. A maximal independent set is called a \emph{basis}. 
	
\end{definition} 




\begin{definition} \label{DEF_Rankfunction}
	The \emph{rank} of a set $ X \subseteq E$ is the cardinality of the inclusion-maximal independent subset of $X$. We denote the rank of $X$ as $\rho(X)$. The \emph{rank of a matroid} $\mathcal{M}$ is $\rho(E)$.
\end{definition}
 

\begin{definition}
	A subset $D \subseteq E$ of a matroid $\mathcal{M}$ is a \emph{double circuit} if  
	\begin{align*}
		\rho(D) &= |D| -2 \qquad \text{and} \\
		\forall e \in D: \: \rho(D \backslash \{e\}) &= |D| -2.
	\end{align*}
\end{definition}

\begin{definition}
	The \emph{dual matroid} $\mathcal{M}^\ast$ of a matroid $\mathcal{M} = (E, \mathcal{C})$ is a matroid on the same ground set $E$ such that a set is independent if and only if the complement contains a basis of $\mathcal{M}$. It is $\mathcal{M}^{\ast \ast} = \mathcal{M}$.
	The circuits of the dual matroids are called \emph{cocircuits} of $\mathcal{M}$. 	
\end{definition}

To define the notion of oriented matroids, we need to introduce signed sets. A \emph{signed subset} of a finite set $E$ is an ordered pair $X = (X^+,X^-)$ with $X^+,X^-~\subseteq~E$ such that 
$X^+ \cap X^- = \emptyset$.
We write $-X = (X^-,X^+)$ and $\underline{X} = X^+ \cup X^- \subseteq E$. 

\begin{definition} \label{DEF_OrientedMatroid}
	A pair $\mathcal{O}=(E, \mathcal{C})$ is an oriented matroid if 
	\begin{enumerate}
		\item[(O1)] $(\emptyset, \emptyset) \notin \mathcal{C}$ and if $X \in \mathcal{C}$, then $-X \in \mathcal{C}$.
		\item[(O2)] For two sets $X,Y \in \mathcal{C}$ with $ \underline{X} \subseteq \underline{Y}$, it is either $X = Y$ or $X = -Y$. 
		\item[(O3)] For $X,Y \in \mathcal{C}$ with $X \neq -Y$ and $ e \in X^+ \cap Y^-$ there is a $Z \in \mathcal{C}$ such that $ Z ^+ \subseteq (X^+ \cup Y^+) \backslash \{v\}$ and $ Z ^- \subseteq (X^- \cup Y^+) \backslash \{v\}$.
	\end{enumerate}
	
\end{definition}
The signed sets in $\mathcal{C}$ are called \emph{(signed) circuits} of $\mathcal{O}$ and a \emph{positive circuit} is a (signed) circuit $(X^+, \emptyset)$ of $\mathcal{O}$. If $\underline{X} \subseteq S$ for a set $S$ and a signed set $X$, we say~$X$ \emph{is contained} in $S$. \newline

It is easy to see that the set $\underline{\mathcal{C}} = \{ \underline{X} : \: X \in \mathcal{C} \}$ for a circuit system $\mathcal{C}$ of an oriented matroid is the collection of circuits of a matroid. This matroid is called the \emph{underlying matroid} of $\mathcal{O}$. The \emph{rank of an oriented matroid} is defined as the rank of its underlying matroid.

\begin{definition}
	Two signed sets $X,Y$ are \emph{orthogonal} if $\underline{X} \cap \underline{Y} = \emptyset$ or the restrictions of $X$ and $Y$ to their intersection $\underline{X} \cap \underline{Y}$ is neither equal nor opposite.
\end{definition}

For any oriented matroid $\mathcal{O} =(E,\mathcal{C})$, there is a unique maximal family $\mathcal{C}^\ast$ such that $X$ and~$Y$ are orthogonal for all $X \in \mathcal{C}$ and $Y \in \mathcal{C}^\ast$. This set is a set of circuits of an oriented matroid on the ground set $E$, called the \emph{dual oriented matroid} of~$\mathcal{O}$.
The circuits of the dual matroid are called \emph{cocircuits}.	\par
The matroid underlying the dual oriented matroid is the dual of the underlying matroid.

\section{Holmsen's Theorem}

The following theorem is the theorem of Holmsen stated in \cite{Holmsen}. 
In this section we will only show the connection of this version to the Colorful Carath\'eodory (Theorem \ref{THM_ColofulCaratheodory}). 
\begin{theorem}[Holmsen] \label{THM_Holmsen}
	 Let $\mathcal{O}$ be an oriented matroid on the ground set $E$ with rank $r$. Consider a matroid $\mathcal{M}$ on the same ground set $E$ and with rank function $\rho$ such that $\rho(E) > r$. If every subset $S \subseteq E$ with $\rho(E \backslash S ) < r$ contains a positive circuit of $\mathcal{O}$, there exists a positive circuit of $\mathcal{O}$ which is independent in $\mathcal{M}$.
\end{theorem}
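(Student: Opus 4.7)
My plan is to argue by contradiction. Assuming every positive circuit of $\mathcal{O}$ is $\mathcal{M}$-dependent, the goal is to exhibit a set $S \subseteq E$ with $\rho(E \setminus S) < r$ containing no positive circuit, contradicting the hypothesis. First, I would extract structural information by choosing an inclusion-minimal $S \subseteq E$ with $\rho(E \setminus S) < r$. Since $\rho$ rises by at most one when a single element is adjoined, minimality forces $\rho(E \setminus S) = r-1$ and $e \notin \operatorname{cl}_{\mathcal{M}}(E \setminus S)$ for every $e \in S$; hence $F \defgl E \setminus S$ is an $\mathcal{M}$-flat of rank $r-1$, and $S$ is the complement of a flat, equivalently a union of $\mathcal{M}$-cocircuits. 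The hypothesis then supplies a positive circuit $C_0 \subseteq S$.

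Next, since $C_0$ is $\mathcal{M}$-dependent it contains an $\mathcal{M}$-circuit $D$; for any $e \in D$ the set $D \setminus \{e\}$ already $\mathcal{M}$-spans $e$. The plan is to combine $C_0$ with a second positive circuit obtained by applying the hypothesis to a slightly perturbed member of the family $\{T \subseteq E : \rho(E \setminus T) < r\}$ (for example, swapping $e$ for an element of $F$), and to apply the oriented-matroid circuit elimination axiom (O3) to produce a positive circuit supported on a strictly smaller subset of $S$. Iterating would shrink $S$ past its inclusion-minimality, the desired contradiction.

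The main obstacle is precisely this gluing step: axiom (O3) demands an element $e \in X^+ \cap Y^-$ of \emph{opposite} signs, whereas any two positive circuits have empty negative parts, so no direct cancellation is possible between them. To bridge this gap one must import sign information from elsewhere, most naturally by working in the dual oriented matroid $\mathcal{O}^\ast$: the orthogonality of every positive circuit with every cocircuit of $\mathcal{O}$, together with the rich cocircuit structure on the flat $F$ provided by duality, should yield a cocircuit of $\mathcal{O}$ carrying opposite signs on some element shared with $C_0$, allowing (O3) to be applied between $C_0$ and this cocircuit in a minor of $\mathcal{O}$ obtained by contracting part of $F$. Fabricating such a cocircuit from the hypothesis alone is the delicate combinatorial point, and is plausibly why Holmsen's original proof invoked the topological representation theorem: on the sphere representing $\mathcal{O}$, positive circuits correspond to closed hemispheres containing a distinguished point and the required exchange becomes a continuous deformation not visible to the purely axiomatic route. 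If the dualization above cannot be made to work, a fallback would be to translate through the dual geometric formulation announced in the abstract and run a Sperner/KKM-type colouring argument on that side.
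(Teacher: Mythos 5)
Your proposal is not a proof but a plan, and it stalls exactly at the step that carries all the difficulty. First, the contradiction you set up never closes: you assume every positive circuit is $\mathcal{M}$-dependent and aim to produce a set $S$ with $\rho(E\setminus S)<r$ containing no positive circuit, but what your iteration would actually produce (if it worked) is a positive circuit supported on a smaller subset of $S$. That does not contradict the inclusion-minimality of $S$, which was minimality with respect to the rank condition $\rho(E\setminus S)<r$, not with respect to containing positive circuits; a proper subset $S'\subsetneq S$ has $\rho(E\setminus S')=r$ and the hypothesis says nothing about it. Second, and more seriously, you yourself identify that axiom (O3) cannot be applied to two positive circuits (their negative parts are empty), and the device you propose to repair this --- extracting a cocircuit with mixed signs from orthogonality and the flat $F$, then eliminating in a contraction minor --- is left entirely unconstructed. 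That missing construction \emph{is} the theorem; everything before it is routine. As you suspect, Holmsen's own proof does not proceed by circuit elimination at all but goes through the topological representation theorem, and no elementary elimination argument of the kind you sketch is supplied here or known to close the gap.

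For calibration against the paper: the paper does not prove Theorem \ref{THM_Holmsen} either. The statement is imported from Holmsen's article, where it is established topologically; the proof environment that follows it in the paper establishes the reverse-direction specialization, namely that Theorem \ref{THM_Holmsen} implies the Colorful Carath\'eodory Theorem (by encoding the point configuration as an oriented matroid of minimal affine dependencies and the colour classes as a partition matroid). The paper's actual contributions are the reductions in Claims 1 and 2 (normalizing to $\rho(E)=r+1$ and to $\rho(E\setminus S)=r-1$) and the dualization to Theorem \ref{THM_Dual}. So there is no combinatorial proof in the paper for your attempt to be measured against; if you want a complete argument you must either follow Holmsen's topological route or find the genuinely new combinatorial idea your sketch presupposes.
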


At first glance you may not notice the connection of this theorem to the Colorful Carath\'eodory Theorem. For this reason, we will recall from \cite{Holmsen} that the Colorful Carath\'eodory is a special case. 
\begin{proof}[Holmsen's Theorem $\Rightarrow$ Colorful Carath\'eodory]
	Let $P_0, P_1, \ldots, P_d$ be subsets of $\R^d$ and $x \in \R^d$ a point such that $x \in \conv P_i$ for all $i \in \{1, \ldots, d \}$.
	Consider the disjoint union $E = P_0 \dot{\cup} P_1 \dot{\cup} \cdots \dot{\cup} P_d$ as a multiset of points in $\R^d$. We assume $ x \notin E$ (if $x$ is already in one of the sets $P_i$ ($i \in \{1,\ldots,d\}$), the statement is trivial).
	For every inclusion-minimal subset $S \subseteq E$ with a linear dependency $\sum_{p \in S} \alpha_p(p-x) =0$ ($\alpha_p \in \R$), we consider the two subsets of $S$: 
	\begin{align*}
		S^+ = \{ p \in S: \: \alpha_p >0 \},\\
		S^- = \{ p \in S: \: \alpha_p <0\}.
	\end{align*}
	All signed subsets $(S^+,S^-)$ constructed in such a way build an oriented matroid $\mathcal{O}=(E, \mathcal{C})$ where $\mathcal{C}$ is the set of signed subsets $(S^+,S^-)$.
	This so constructed oriented matroid has the rank $r$ equal to the dimension of the affine span of $E \cup \{x\}$.
	 \par 
	Observe that for any set $S \subseteq E$ and any $y \in \conv S$ there is linear dependency with only positive coefficients. This shows that 
	\begin{align*}
		x \in \conv S \quad \Leftrightarrow \quad S \text{ contains a positive circuit of } \mathcal{O}.
	\end{align*}
	 By the preliminaries of the Colorful Carath\'eodory $x$ is contained in all $\conv P_i$. By this observation we know that all $P_i$ contain a positive circuit. \par 
	Furthermore, we define the matroid $\mathcal{M}$ by its independent sets. So let $I \subseteq E$ be independent if and only if  $|I \cap P_i| \leq 1$ for all $i \in \{1, \ldots , d\}$. This is a so called partition matroid and hence it fulfills the matroid axioms. 
	We need to check whether every subset $S \subseteq E$ with $\rho(E \backslash S) < r $ contains a positive circuit in order to apply Theorem \ref{THM_Holmsen}. The condition $\rho(E \backslash S) <r$ means there are at most $r-1$ elements in $E \backslash S$, so this set has an empty intersection with $d+1-(r-1) = d-r+2 \geq 2$ of the $d+1$ sets $P_0, \ldots , P_{d}$. This shows that $S$ contains two of the sets $P_0, \ldots , P_{d}$. Each $P_i$ ($i =0, \ldots, d$) contains a positive circuit and so does $S$ itself. \par 
	Now all necessary conditions of Theorem \ref{THM_Holmsen} are fulfilled, so there is a positive circuit $C$ of $\mathcal{O}$ such that $ C \in \mathcal{I}$. By the observation above this means $ x~\in~\conv C$. In $C$ there is at most one point of every set $P_i$ so we can extend $C$ to a set consisting of exactly one point of every $P_i$ as mentioned in the Colorful Carath\'eodory Theorem. 
\end{proof}

\section{Dual of Holmsen's Theorem}

Using the basics introduced previously we are finally able to reformulate Holmsen's Theorem.  \\

%
\textbf{Claim 1:} \textit{We can replace the condition ``for all $S \subseteq E$ with $\rho(E \backslash S )<r$'' by ``for all $S \subseteq E$ with $\rho(E\backslash S)=r-1$'' in Theorem \ref{THM_Holmsen}.} 
\begin{proof}
	Every subset $S$ with $\rho(E \backslash S )=r-1$ fulfills obviously the condition that the rank of the complement is smaller than $r$. If we start with a subset $S \subseteq E$ such that $\rho(E \backslash S ) < r-1$, we extend the set $E \backslash S$ to $ E \backslash S'$ such that $\rho(E \backslash S' )=r-1$ (possible since the rank of $\mathcal{M}$ is greater than $r$). This implies $S' \subseteq S$. The smaller set $S'$ contains by assumption an positive circuit which is also contained in $S$.
\end{proof}

\textbf{Claim 2:}
\textit{It is enough to show Theorem \ref{THM_Holmsen} for matroids with rank $r+1$. In particular, matroids of rank r+2 or higher need not to be considered
explicitely.}
\begin{proof}
	To see this we assume that $\mathcal{O}$ is an oriented matroid of rank $r$ and $\mathcal{M}$ a matroid of rank $> r+1$, both on the same ground set $E$. We delete elements of $E$ such that, we get a submatroid $\mathcal{M}'$ of $\mathcal{M}$ of rank $r+1$ on the ground set $E' \subset E$. Let $r-k$ for $k \in \N_0$ be the rank of the oriented matroid $\mathcal{O}'$, the restriction of~$\mathcal{O}$ to $E'$. We add $k$ new elements $e_1, \ldots , e_k$ as a loop to~$\mathcal{M}'$ and in such a way that they are coloops in the underlying matroid of $\mathcal{O}'$. The new matroid is denoted by $\mathcal{M}' + \{e_1, \ldots , e_k\}$ and the oriented matroid by $\mathcal{O}' + \{e_1, \ldots , e_k\}$. The rank of the new matroid $\mathcal{M}' + \{e_1, \ldots , e_k\}$ with elements $E' \cup \{e_1, \ldots , e_k\}$ is $r+1$ since the added elements do not appear in any bases. The rank of the new oriented matroid increases by $k$. We need to show that the condition ``for all $S \subseteq E' \cup \{e_1, \ldots , e_k\}$ with $\rho((E' \cup \{e_1, \ldots, e_k\}) \backslash S)~=~r~-~1$, $S$ contains a positive circuit of $\mathcal{O}' + \{e_1, \ldots , e_k\}$'' holds in the constructed matroid and oriented matroid. This is true since for any such subset $S$ with $\{e_1, \ldots , e_k\}~\subseteq~S$, we get a positive circuit $C$ of $\mathcal{O}' + \{e_1, \ldots , e_k \}$ contained in $S$. Since a coloop is in no circuit, we get $C \cap \{e_1, \ldots , e_k\} = \emptyset$. For any other subset $S$ with $\rho((E'~\cup~\{e_1, \ldots, e_k\}) \backslash S)~=~r-1$, we can extend the set $S$ by the elements $e_1, \ldots, e_k$ and get a positive circuit without $\{e_1, \ldots e_k\}$, so the positive circuit is contained in $S$ itself. So the precondition is true. Hence there is a positive circuit $C'$ of $\mathcal{O}' + \{e_1, \ldots , e_k\}$ (with $C' \cap \{e_1, \ldots , e_k\} = \emptyset $) which is independent in $\mathcal{M}' + \{e_1, \ldots , e_k\}$. This implies that $C'$ is independent in $\mathcal{M}$ as well.
	This shows that the condition ``$\rho(E)>r$'' can be replaced by ``$\rho(E) = r+1$''.
\end{proof}

So we showed that Theorem \ref{THM_Holmsen} is an equivalent formulation of
\begin{theorem} \label{THM_SimplifiedHolmsen}
	Let $\mathcal{O}$ be an oriented matroid on the ground set $E$ with rank $r$. Consider a matroid $\mathcal{M}$ on the same ground set $E$ and with rank function $\rho$ such that $\rho(E)=r+1$. If every subset $S \subseteq E$ with $\rho(E \backslash S ) = r-1$ contains a positive circuit of $\mathcal{O}$, there exists a positive circuit of $\mathcal{O}$ which is independent in $\mathcal{M}$.
\end{theorem}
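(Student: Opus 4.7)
The plan is to derive Theorem \ref{THM_SimplifiedHolmsen} directly from Theorem \ref{THM_Holmsen}, whose topological proof is given in \cite{Holmsen}. The two statements differ only in that Theorem \ref{THM_SimplifiedHolmsen} sharpens the rank hypothesis on $\mathcal{M}$ from $\rho(E) > r$ to $\rho(E) = r+1$, and weakens the subset hypothesis from ``every $S$ with $\rho(E\backslash S) < r$ contains a positive circuit'' to ``every $S$ with $\rho(E\backslash S) = r-1$ contains a positive circuit''. The first change strengthens the hypothesis, the second weakens it; in both cases the conclusion is the same. So Theorem \ref{THM_SimplifiedHolmsen} is not literally a special case of Theorem \ref{THM_Holmsen}, but it becomes one once the two subset hypotheses are reconciled.

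Concretely, I would proceed in the following order. First, under $\rho(E) = r+1$ the rank hypothesis $\rho(E) > r$ of Theorem \ref{THM_Holmsen} holds automatically. Second, I invoke Claim 1 to promote the given subset hypothesis: for any $S \subseteq E$ with $\rho(E\backslash S) < r-1$, extend $E \backslash S$ to a larger set $E \backslash S'$ of rank exactly $r-1$, which is possible because $\rho(E) = r+1$. The smaller $S' \subseteq S$ then contains a positive circuit of $\mathcal{O}$ by assumption, and this positive circuit is clearly contained in $S$ as well. Hence the full hypothesis of Theorem \ref{THM_Holmsen} is in force, and its conclusion yields the desired positive circuit of $\mathcal{O}$ that is independent in $\mathcal{M}$.

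I do not expect a real obstacle in this direction: all the substantive content has been absorbed into Theorem \ref{THM_Holmsen} itself, which is taken as given. The nontrivial part of the equivalence is in fact the converse, namely Theorem \ref{THM_SimplifiedHolmsen} $\Rightarrow$ Theorem \ref{THM_Holmsen}, and this is precisely what Claim 2 supplies: by deleting elements to cut $\mathcal{M}$ down to rank $r+1$ and then padding with new elements that are loops of $\mathcal{M}'$ and coloops in the underlying matroid of $\mathcal{O}'$, one reduces the general case $\rho(E) > r$ to the case $\rho(E) = r+1$. Taken together, Claims 1 and 2 certify that Theorems \ref{THM_Holmsen} and \ref{THM_SimplifiedHolmsen} are equivalent reformulations, and Theorem \ref{THM_SimplifiedHolmsen} follows.
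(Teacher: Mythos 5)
Your proposal is correct and matches the paper's route: the paper likewise obtains Theorem \ref{THM_SimplifiedHolmsen} from Theorem \ref{THM_Holmsen} via Claims 1 and 2, and you correctly isolate that only Claim 1 (plus the trivial observation $r+1>r$) is needed for this direction, with Claim 2 serving the converse of the equivalence.
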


By the Topological Representation Theorem every oriented matroid can be realized as an oriented pseudosphere arrangement. 
A circuit in such an arrangement is a collection of half-spaces with empty intersection. 
Since cocircuits correspond to vertices in the arrangement and so they are easier to visualize, we will present a dual version of Holmsen's Theorem.

\begin{theorem}[Dual version of Holmsen's Theorem] \label{THM_Dual}
	Let $\mathcal{M}$ be a matroid with rank function $\rho$ such that $\rho(E) = r-1$. Furthermore we consider an oriented matroid~$\mathcal{O}$ with rank $r$ on the same ground set $E$. If every double circuit of $\mathcal{M}$ contains a positive cocircuit of $\mathcal{O}$, then there exists a positive cocircuit in $\mathcal{O}$, whose complement spans $\mathcal{M}$.
\end{theorem}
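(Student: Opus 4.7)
The plan is to deduce Theorem \ref{THM_Dual} from the simplified version, Theorem \ref{THM_SimplifiedHolmsen}, by passing to duals on both the matroid and the oriented matroid sides. Write $n \defgl |E|$. Given a matroid $\mathcal{M}$ of rank $r-1$ and an oriented matroid $\mathcal{O}$ of rank $r$ as in Theorem \ref{THM_Dual}, I consider the dual matroid $\mathcal{M}^\ast$, which has rank $n-r+1$, and the dual oriented matroid $\mathcal{O}^\ast$, which has rank $n-r$. Setting $\tilde r \defgl n-r$, the oriented matroid $\mathcal{O}^\ast$ has rank $\tilde r$ and $\rho_{\mathcal{M}^\ast}(E) = \tilde r + 1$, so the pair $(\mathcal{O}^\ast,\mathcal{M}^\ast)$ is set up exactly for Theorem \ref{THM_SimplifiedHolmsen}.

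The central task is to verify the hypothesis of Theorem \ref{THM_SimplifiedHolmsen} for $(\mathcal{O}^\ast,\mathcal{M}^\ast)$, namely that every $S \subseteq E$ with $\rho_{\mathcal{M}^\ast}(E\setminus S) = \tilde r - 1$ contains a positive circuit of $\mathcal{O}^\ast$. Using the standard duality identity $\rho^\ast(X)=|X|+\rho(E\setminus X)-\rho(E)$, a short calculation shows that this condition on $S$ is equivalent to $E\setminus S$ having corank $2$ in $\mathcal{M}^\ast$. Every such $E\setminus S$ is contained in its closure $F$, which is a corank-$2$ flat of $\mathcal{M}^\ast$, and containment of a positive circuit is preserved under taking supersets; hence it suffices to verify the hypothesis for sets of the form $S = E\setminus F$, where $F$ is a corank-$2$ flat of $\mathcal{M}^\ast$.

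The key technical observation is the bijection $F \mapsto E\setminus F$ between corank-$2$ flats of $\mathcal{M}^\ast$ and double circuits of $\mathcal{M}$. Both inclusions follow from the duality formula for the rank function together with the standard fact that for a flat $F$ and any $e \notin F$ one has $\rho(F \cup \{e\}) = \rho(F)+1$; this latter fact is exactly what upgrades the defect condition $\rho(T) = |T|-2$ for $T = E \setminus F$ into the stronger double-circuit condition $\rho(T\setminus\{e\}) = |T|-2$ for every $e \in T$. Combined with the definitional identity ``positive cocircuits of $\mathcal{O}$ $=$ positive circuits of $\mathcal{O}^\ast$'', this bijection translates the hypothesis of Theorem \ref{THM_Dual} into exactly the hypothesis of Theorem \ref{THM_SimplifiedHolmsen} applied to $(\mathcal{O}^\ast, \mathcal{M}^\ast)$.

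Applying Theorem \ref{THM_SimplifiedHolmsen} then yields a positive circuit $C$ of $\mathcal{O}^\ast$ that is independent in $\mathcal{M}^\ast$. By the definition of the dual matroid, $C$ being independent in $\mathcal{M}^\ast$ means $E\setminus C$ contains a basis of $\mathcal{M}$, i.e.\ $E \setminus C$ spans $\mathcal{M}$. Since $C$ is simultaneously a positive cocircuit of $\mathcal{O}$, this delivers the conclusion of Theorem \ref{THM_Dual}. I expect the only nontrivial step of the write-up to be the flat/double-circuit bijection; everything else amounts to unpacking definitions and invoking the already-established simplified version.
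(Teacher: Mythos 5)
Your proposal is correct and follows essentially the same route as the paper: dualize both structures, check via the rank duality formula that $(\mathcal{O}^\ast,\mathcal{M}^\ast)$ satisfies the hypotheses of Theorem~\ref{THM_SimplifiedHolmsen}, identify the relevant minimal sets with double circuits of $\mathcal{M}$, and translate the resulting positive circuit of $\mathcal{O}^\ast$ back. Your phrasing via corank-$2$ flats of $\mathcal{M}^\ast$ is just the complementary description of the paper's inclusion-minimal sets $S$ with $\rho_{\mathcal{M}^\ast}(E\setminus S)=r^\ast-1$, so the two arguments coincide.
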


\begin{proof}
	Let $\mathcal{M}, \mathcal{O}$ be as in the theorem. 
	From the rank formula 
	\begin{align*} \label{PROP_Rankformula}
		\rho_{\mathcal{M}^\ast}(X) = |X| + \rho_{\mathcal{M}}(E \backslash X) - \rho_{\mathcal{M}}(E),
	\end{align*}
	we see that $r^\ast \defgl |E|-r$ is the rank of the dual oriented matroid $\mathcal{O}^*$ and
	\begin{align*}
		\rho_{\mathcal{M}^\ast}(E) &= |E|+ \rho_{\mathcal{M}}(E\backslash E)- \rho_{\mathcal{M}}(E) \\
		&= |E| + \rho_{\mathcal{M}}(\emptyset) -(r-1) \quad = r^\ast +1.
	\end{align*}
	So $\mathcal{M}^\ast, \mathcal{O}^\ast$ fulfill the assumptions of Theorem \ref{THM_SimplifiedHolmsen}.
	Furthermore every inclusion-minimal subset $S \subseteq E$ with $\rho_{\mathcal{M}^\ast}(E \backslash S) = r^\ast - 1$ is a double circuit in its dual, which follows from the rank formula and of $\mathcal{M}^{\ast\ast} = \mathcal{M}$:
	\begin{align*}
		\rho_{\mathcal{M}}(S) = |S| + \rho_{\mathcal{M}^\ast}(E \backslash S) - \rho_{\mathcal{M}^\ast}(E) = |S| + r^\ast - 1 - (r^\ast+1) = |S| -2.
	\end{align*}
	Since $S$ is inclusion-minimal with this property, the complement of $S \backslash \{e\}$ for every $e \in S$ has  rank $\rho_{\mathcal{M}^\ast}(E \backslash (S \backslash \{e\})) = r^\ast$. This implies that 
	\begin{align*}
		\rho_{\mathcal{M}}(S\backslash \{e\}) = |S| -1 + r^\ast -(r^\ast +1) =  |S| -2
	\end{align*}
	for every $e \in E$. So $S$ is a double circuit in $\mathcal{M}$. Every set $S' \subseteq E$ which is not inclusion-minimal contains an inclusion-minimal set $S$ and hence it contains a double circuit. Every double circuit contains a positive cocircuit of $\mathcal{O}$ by assumption and so every $S$ contains a positive circuit of $\mathcal{O}^\ast$. So by Theorem~\ref{THM_SimplifiedHolmsen}, we know that there is a positive circuit of $\mathcal{O}^\ast$ which is independent in $\mathcal{M}^\ast$. This is a positive cocircuit of $\mathcal{O}$ whose complement spans~$\mathcal{M}$.
\end{proof}

The dual version mentioned in Theorem \ref{THM_Dual} is an equivalent formulation of Holmsen's Theorem. The proof that Holmsen's Theorem follows from the dual version mentioned above works analogously. \par 

\section{Complementary Positive Cocircuits}
We will now think of an oriented matroid of rank $d+1$ as an arrangement of signed pseudospheres in $S^d$. The positive cocircuits are the vertices of the main polytope. The complement of a cocircuit corresponds exactly to the spheres going through a vertex of the polytope. There are at least $d$ spheres intersecting in a vertex $v$. 
So if the complement of a positive cocircuit is considered to span the matroid~$\mathcal{M}$ of rank $d$, this means that the elements corresponding to the spheres, intersecting in the cocircuit, span the matroid.
If the complement of a positive cocircuit $C$ does not span the matroid, the rank is less than $\rho(E) = d$, so 
\begin{align*}
	\rho(E \backslash C ) \leq d-1 \leq |E \backslash C| -1.
\end{align*}
This shows that $E \backslash C$ contains a circuit of $\mathcal{M}$. \par 
If the vertex $v$ of the main polytope is degenerate and $E \backslash C$ is non-spanning or $\rho(E \backslash C) \leq d-2$ there exists a vertex $V'$ of the main polytope such that the set of spheres intersecting in $v'$ is disjoint from those intersecting in $v$. We call such a vertex \emph{complementary vertex}. \par 
We will now study in which cases the union of the elements intersecting in two adjacent (in the 1-skeleton of the face lattice of the positive polytope) positive cocircuits contain a double circuit of the matroid, i.e. in which case the edge connecting those vertices has a complementary vertex. For this reason note that:
 
\begin{proposition}
	A set $X \subseteq E$ contains a double circuit if and only if the rank of $X$ is $\rho(X) \leq |X| -2$.
\end{proposition}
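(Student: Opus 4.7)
The plan is a standard two-direction argument leveraging the monotonicity of rank (and the unit-increase property $\rho(A) \le \rho(A \cup \{e\}) \le \rho(A) + 1$), plus an inclusion-minimal witness for the harder direction.

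For the ``if $X$ contains a double circuit then $\rho(X) \leq |X|-2$'' direction, I would work with the corank function $\nu(X) \defgl |X| - \rho(X)$ and show it is monotone under inclusion. Writing $X = D \cup \{e_1, \ldots, e_k\}$ with $D$ the double circuit, adding the elements one at a time can increase $\rho$ by at most $1$ per step, so $\rho(X) \leq \rho(D) + k$, which yields $\nu(X) \geq \nu(D) = 2$. This gives $\rho(X) \leq |X| - 2$ immediately.

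For the converse, assume $\rho(X) \leq |X|-2$ and choose $D \subseteq X$ to be an inclusion-minimal subset with $\rho(D) \leq |D|-2$. The minimality forces, for every $e \in D$, that $\rho(D \setminus \{e\}) \geq |D \setminus \{e\}| - 1 = |D|-2$. Combined with $\rho(D \setminus \{e\}) \leq \rho(D) \leq |D|-2$, this pins down $\rho(D) = \rho(D \setminus \{e\}) = |D|-2$ for all $e \in D$, which is exactly the definition of a double circuit.

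I do not anticipate a real obstacle here; the only subtlety is choosing the right witness in the reverse direction, namely an inclusion-minimal $D \subseteq X$ with $|D| - \rho(D) \geq 2$ (rather than, say, a minimal dependent set, which would only give an ordinary circuit). Once the minimality is exploited as above, both equalities in the definition of double circuit fall out simultaneously.
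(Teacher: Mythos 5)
Your proof is correct: the forward direction follows from the unit-increase property of the rank function, and the inclusion-minimal witness in the converse direction is exactly the right choice, since minimality together with monotonicity forces $\rho(D)=\rho(D\setminus\{e\})=|D|-2$ for all $e\in D$. The paper states this proposition without proof, so there is nothing to compare against; your argument is the standard one and fills that gap cleanly.
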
  

\begin{proposition} \label{Prop_ContainsDoubleCircuit}
	Every non-spanning set in a matroid of rank $d$ with more than~$d$ elements contains a double circuit.
\end{proposition}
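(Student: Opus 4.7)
The plan is to derive the statement as a one-line corollary of the immediately preceding proposition, which characterizes the presence of a double circuit in $X$ by the inequality $\rho(X) \leq |X|-2$. So I would simply verify this numerical inequality under the two hypotheses given.

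First I would unpack ``non-spanning'': by definition, a set $X$ is non-spanning in a matroid of rank $d$ precisely when $\rho(X) < \rho(E) = d$, i.e.\ $\rho(X) \leq d-1$. Next I would unpack ``more than $d$ elements'': $|X| > d$ means $|X| \geq d+1$, hence $|X| - 2 \geq d - 1$. Combining these two observations,
\begin{align*}
    \rho(X) \leq d - 1 \leq |X| - 2,
\end{align*}
so the hypothesis of the previous proposition is satisfied and therefore $X$ contains a double circuit.

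There is essentially no obstacle here; the whole content is bookkeeping between the two inequalities $\rho(X)\le d-1$ and $|X|\ge d+1$. The only thing I would double-check is that the convention of the paper indeed reads ``non-spanning'' as $\rho(X)<\rho(E)$ rather than as $\rho(X)<|X|$, but this is the standard matroid-theoretic reading used throughout the section (a set $S$ spans $\mathcal M$ when $\rho(S)=\rho(E)$), so the claim follows directly.
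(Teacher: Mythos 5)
Your proof is correct and matches the paper's intended argument: the paper states the characterization $\rho(X)\le |X|-2$ as the immediately preceding proposition precisely so that this statement follows from the two inequalities $\rho(X)\le d-1$ and $|X|\ge d+1$, exactly as you compute. Nothing further is needed.
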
 


So by the last Proposition \ref{Prop_ContainsDoubleCircuit}, we may assume that in each vertex there are exactly $d$ intersecting elements. \par 
Let $C_1$ and $C_2$ be two adjacent positive cocircuit 
and each complement does not span the matroid. So
\begin{align*}
	\rho(E \backslash C_i) \leq d-1 \text{ for } i=1,2.
\end{align*}
If $(E \backslash C_1) \cap (E \backslash C_2)$ is independent, the submodularity of the rank function shows
\begin{align*}
	\rho\left((E \backslash C_1) \cup (E \backslash C_2)\right) &\leq \rho(E \backslash C_1) + \rho(E \backslash C_2) - \rho\left((E \backslash C_1) \cap (E \backslash C_2)\right) \\
	& \leq (d-1) + (d-1) - \left|(E \backslash C_1) \cap (E \backslash C_2)\right| \\
	& \leq d-1.
\end{align*}
Furthermore $(E \backslash C_1) \cup (E \backslash C_2)$ contains 
$d+1$ elements, so  
$(E \backslash C_1) \cup (E \backslash C_2)$ contains a double circuit if $(E \backslash C_1) \cap (E \backslash C_2)$ is independent. 

In a similar way, we get that $(E \backslash C_1) \cup (E \backslash C_2)$ contains a double circuit if either the additional element of $E\backslash C_1$ 
or the additional element of $ E\backslash C_2$ does not increase the rank.\par 
 By the assumption of the dual version (Theorem \ref{THM_Dual}) every double circuit contains a positive cocircuit. So the elements intersecting in the two cocircuits $C_1$ and~$C_2$ contain a positive cocircuit. \par 
The remaining part is that both elements increase the rank. In this case the rank of the union is
\begin{align*}
	\rho(E\backslash C_1 \cup E \backslash C_2) &= \rho(E\backslash C_1 \cap E \backslash C_2) +2 \\
	&< \: \: |E\backslash C_1 \cap E \backslash C_2| \;  +2 \\
	&= \: \: |E\backslash C_1 \cup E \backslash C_2|.
\end{align*}

In this case, the union does not contain a double circuit if the intersection does not contain one. On the other hand  $E\backslash C_1 \cup E \backslash C_2$ has rank $d$ and still contains a spanning positive cocircuit if $C_1,C_2$ do not span the matroid.

\bibliographystyle{abbrv}
\bibliography{bib}
\end{document}